\theoremstyle{thmstyleone}%
\newtheorem{theorem}{Theorem}
\theoremstyle{thmstyletwo}%
\newtheorem{corollary}{Corollary}%
\def\N{\mathbb{N}}
\def\R{\mathbb{R}}
\def\eq#1{{\rm(\ref{#1})}}
\begin{document}

\title[Hermite--Hadamard type inequalities by using Newton-Cotes quadrature formulas]{Hermite--Hadamard type inequalities by using Newton-Cotes quadrature formulas}


\author[1]{\fnm{Angshuman R.} \sur{Goswami}}\email{goswami.angshuman.robin@mik.uni-pannon.hu}

\author*[1]{\fnm{Ferenc} \sur{Hartung}}\email{hartung.ferenc@mik.uni-pannon.hu}

\affil*[1]{\orgdiv{Department of Mathematics}, \orgname{University of Pannonia}, \orgaddress{\street{Egyetem u. 10}, \city{Veszpr\'em}, \postcode{H-8200},  \country{Hungary}}}

\abstract{A convex function $f:[a,b]\to\R$ satisfies the so-called Hermite-Hadamard
inequality
$$
f\bigg(\dfrac{a+b}{2}\bigg)\leq \frac{1}{b-a}\int_a^{b}f(t)dt\leq \dfrac{f(a)+f(b)}{2}.
$$
Motivated by the above estimates, in this paper we consider approximately monotone and convex functions, and give upper and lower bounds to the numerical integral mean, i.e., to $\frac1{b-a}\mathcal{I}_{n}(f)$, where  $\mathcal{I}_{n}(f)$
denotes some of the most popular Newton-Cotes quadrature formulas.}

\keywords{approximately monotone functions, approximately convex functions, Newton-Cotes quadrature formulas, Hermite-Hadamard
inequality}


\pacs[MSC Classification]{26A48, 26A51, 65D30}

\maketitle

\section{Introduction}\label{sec1}

The theory of approximately convex functions goes back to the work of Hyers and Ulam \cite{HU} where they first introduced the concept of $\delta$-convexity. A function $f:I\to\R$ is said to be $\delta$-convex, if for any $x,y\in I$ and for all $t\in[0,1]$ the following functional inequality holds
$$
f(tx+(1-t)y)\leq tf(x)+(1-t)f(y)+\delta.
$$
They showed that a function satisfying $\delta$-convexity can be decomposed as the algebraic sum of an ordinary convex and a bounded function whose supremum norm is not greater than $\frac{\delta}{2}.$
Since then many different versions of approximate convexity were introduced and investigated (see \cite{GosPal20}, \cite{GosPal21}, \cite{890} and their references).

The notion of approximate monotonicity often appears during the study of generalized convex functions. A function $f:I\to\R$ is said to be $\varepsilon$-monotone, if it satisfies the following functional inequality
$$
f(x)\leq f(y)+\varepsilon \qquad\mbox{for all}\quad x,y\in I \quad\mbox{with}\quad x<y.
$$
P\'ales in \cite{MON} showed that an $\varepsilon$-monotone function $f$ can also be expressed as $g+h$; where $g$ is nondecreasing and $h$ is a bounded function satisfying $||h||\leq \frac{\varepsilon}{2}.$

We are going to see results associated with more generalized versions of approximate monotonicity and convexity. Throughout this paper $I\subseteq\R $ represents the non-empty and non-singleton interval $[a,b]$. The length of the interval $I$ will be denoted by $\ell(I)$. The function $\Phi:[0,\ell(I)]\to\R_+$ will be termed as error function, where $\R_+$ denotes the set of nonnegative reals.
With the help of error function
$\Phi$, we can introduce terminologies such as approximately monotone, H\"older, convex and affine functions. These function classes are studied in depth in the papers \cite{GosPal20}-\cite{890}, \cite{MakPal12a}, where along with structural properties, some characterizations, decompositions, several sandwich type theorems and applications are discussed.
For readability purposes, we recall these notions.

A function $f: I\to\R$ is said to be $\Phi$-monotone if for any $x,y\in I$ with $x<y$, the following inequality holds
$$f(x)\leq f(y)+\Phi(y-x).$$ This version of approximate monotonicity was first introduced in
\cite{GosPal20}. A more in depth study can be found in \cite{GosPal21}.
Due to association of the non-negative error term, the class of $\Phi$-monotone functions is bigger than the class of ordinary nondecreasing functions. If both of the functions $f$ and $-f$ satisfy $\Phi$-monotonicity, i.e.,
$$
|f(x)-f(y)|\leq \Phi(|y-x|)\quad \mbox{for all}\quad  x,y\in I,
$$
then $f$ is termed as $\Phi$-H\"older.

The following definition was introduced in \cite{MakPal12a}.
A function $f: I\to\R$ is said to be $\Phi$-convex if the  functional inequality
\begin{equation}\label{2}
 {f(tx+(1-t)y)\leq tf(x)+(1-t)f(y)+t\Phi((1-t)|y-x|)+(1-t)\Phi(t|y-x|)}
\end{equation}
holds for any $x,y\in I$ and for all $t\in[0,1]$.
It is also evident that any ordinary convex function also possesses $\Phi$-convexity.
If both $f$ and $-f$ are $\Phi$-convex, i.e.,
$$
|f(tx+(1-t)y)-tf(x)-(1-t)f(y)|\leq t\Phi((1-t)|y-x|)+(1-t)\Phi(t|y-x|)
$$
holds for any $x,y\in I$ and for all $t\in[0,1]$, then we say that $f$ is
a $\Phi$-affine function.

In \cite{GosPal21}, one of the results states that if $f:I\to\R$ is $\Phi$-monotone, then
$$
f(a)-\dfrac{1}{b-a}\int_0^{b-a}\Phi(s)\,ds\leq \dfrac{1}{b-a}\int_a^{b}f(s)\,ds
\leq f(b)+\dfrac{1}{b-a}\int_0^{b-a}\Phi(s)\,ds
$$
holds,
provided both $f$ and $\Phi$ are Lebesgue integrable.
Our goal in this paper is to estimate the numerical integral mean of $f$, i.e.,
$\frac1{b-a}\mathcal{I}_{n}(f)$, where  $\mathcal{I}_{n}(f)$
denotes some of the most popular Newton-Cotes quadrature formulas: the Trapezoidal, Simpson's
and Simpson's $3/8$ rules. Without assuming the Lebesgue integrability of the error function $\Phi$, we can show that a continuous $\Phi$-monontone function $f$ satisfies the following inequality
$$
f(a)-\dfrac{n}{2}\Phi\bigg(\dfrac{b-a}{n}\bigg)\leq \dfrac{1}{b-a}\mathcal{I}_{n}(f)
\leq f(b)+\dfrac{n}{2}\Phi\bigg(\dfrac{b-a}{n}\bigg).
$$
Let $x_i=a+\frac{(b-a)i}{n}$, $i=0,1,\ldots,n$ be an equidistant partition of the interval $[a,b]$.
For a continuous function $f:I\to\R$, we recall the Trapezoidal, Simpson's
and Simpson's $3/8$
rules as follows (see e.g. \cite{BF}):
$$\mathcal{T}_{n}(f):=\bigg(\dfrac{b-a}{2n}\bigg)\bigg[f(x_0)+2\sum_{i=1}^{n-1}f(x_i)+f(x_n)\bigg] \quad (n\in\N),$$
$$\mathcal{S}_{n}
(f):=\bigg(\dfrac{b-a}{3n}\bigg)
\bigg[f(x_0)+2\sum_{i=1}^{(n/2)-1}f(x_{2i})+4\sum_{i=1}^{n/2}f(x_{2i-1})+f(x_n)\bigg]
\quad (n\in\N \mbox{ is even}),$$
$$\mbox{and}$$
$$\mathcal{S}_{n}^{\frac{3}{8}}(f):=\bigg(\dfrac{3(b-a)}{8n}\bigg)
\sum_{i=1}^{n/3}
\Big[f(x_{3i-3})+3f(x_{3i-2})+3f(x_{3i-1})+f(x_{3i})\Big]
\ \ (n=3k,\  k\in\N).$$

The classical Hermite-Hadamard inequality was independently introduced by Hermite and Hadamard in the \cite{Hadamard} and \cite {Hermite}. It states that "In a compact interval, integral mean of a convex function is greater than the functional value at the mid-point, while the same is dominated by the average of the functional values at the extreme points of that interval." Mathematically, for a convex function $f:I\to\R$, this inequality can be represented as follows
\begin{equation}\label{eHH}
f\bigg(\dfrac{a+b}{2}\bigg)\leq \frac1{b-a}\int_a^{b}f(t)dt\leq \dfrac{f(a)+f(b)}{2}.
\end{equation}
Various generalized forms of this inequality were presented for approximate and higher order convex functions. Along with it many associated concepts and applications were also studied (see e.g. \cite{Bessenyei1}, \cite{Bessenyei2}, \cite{Dragomir} and the references therein).

In this paper, we present the numerical version of Hermite-Hadamard inequality for the continuous $\Phi$-convex function $f:I\to\R$. We show that for any even number $n\in\N$, the following functional inequality holds
$$
f\bigg(\dfrac{a+b}{2}\bigg)-\dfrac{n^2+2}{12}\Phi\Big(\frac{b-a}{n}\Big)\leq
\dfrac{1}{b-a}\mathcal{T}_{n}(f)\leq \dfrac{f(a)+f(b)}{2}+\dfrac{n^2-1}{6}\Phi\Big(\frac{b-a}{n}\Big).
$$
One can easily observe that it resembles very closely with the original inequality \eq{eHH}.
We show similar estimates for odd $n$, see Theorem~\ref{T333} below.

For the cases of $\Phi$-H\"older and $\Phi$-affine functions we will obtain inequalities with improved lower and upper bounds.
Furthermore, in our results, we discuss particular cases where we can omit the dependency on $n$ from the error terms.

The graphs of approximately monotone and convex functions  often appear in the stock prices of share market, various financial and business models, population growth of certain regions etc. To have an estimate of mean values for such scenarios, numerical integration techniques are  useful. Through this we can neglect the stochastic nature of the associated error term.

\section{On approximately monotone functions}\label{sec2}

We start with $\Phi$-monotone and $\Phi$-H\"older functions.
As discussed before, throughout this section, $I$ will denote the non-empty and non-singleton interval $[a,b]$ with $a<b$.
\begin{theorem}\label{T1}
Let $f:I\to\R$ be a continuous $\Phi$-monotone function. Then for any $n\in\N$, the following inequalities hold
\begin{equation}\label{T}
f(a)-\frac{n}{2}\Phi\Big(\frac{b-a}{n}\Big)\leq\frac{1}{b-a}\mathcal{T}_{n}(f)\leq f(b)+ \frac{n}{2}\Phi\Big(\frac{b-a}{n}\Big).
\end{equation}
\end{theorem}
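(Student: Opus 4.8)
The plan is to rewrite the normalized trapezoidal expression as a single weighted sum of the nodal values and then estimate each nodal value against $f(b)$ (for the upper bound) or $f(a)$ (for the lower bound) by iterating $\Phi$-monotonicity across the grid. Writing $h=\frac{b-a}{n}$ for the step size, division by $b-a$ gives
$$
\frac{1}{b-a}\mathcal{T}_{n}(f)=\frac{1}{2n}\Big[f(x_0)+2\sum_{i=1}^{n-1}f(x_i)+f(x_n)\Big],
$$
so that the interior nodes carry weight $2$, the two endpoints carry weight $1$, and the weights sum to $2n$.

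For the upper bound, I would apply $\Phi$-monotonicity to consecutive nodes, $f(x_i)\leq f(x_{i+1})+\Phi(h)$, and telescope upward to $x_n=b$ to obtain $f(x_i)\leq f(b)+(n-i)\Phi(h)$ for every $i=0,1,\ldots,n$. Substituting these estimates into the weighted sum, the $f(b)$ contributions collect to $2n\,f(b)$ (matching the total weight), while the error terms produce the coefficient $n+2\sum_{i=1}^{n-1}(n-i)$ on $\Phi(h)$. Evaluating the arithmetic sum $\sum_{i=1}^{n-1}(n-i)=\frac{n(n-1)}{2}$ shows this coefficient equals $n^2$; dividing by $2n$ yields exactly $f(b)+\frac{n}{2}\Phi(h)$.

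The lower bound is entirely symmetric: rewriting the same inequality as $f(x_{i+1})\geq f(x_i)-\Phi(h)$ and telescoping downward to $x_0=a$ gives $f(x_i)\geq f(a)-i\Phi(h)$, and the identical weight bookkeeping (now with coefficient $n+2\sum_{i=1}^{n-1}i=n^2$ on the error term) turns the weighted sum into $f(a)-\frac{n}{2}\Phi(h)$.

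The only place requiring care is tracking the weights so that the $\Phi(h)$-coefficient comes out to precisely $n^2$, i.e.\ that the normalized error term is exactly $\frac{n}{2}\Phi(h)$; everything else is a direct consequence of telescoped monotonicity, so I do not expect a genuine obstacle here. I would also remark that the argument invokes $\Phi$-monotonicity only at the grid points, so continuity of $f$ is not actually needed for this particular estimate---it is presumably assumed for uniformity with the integral-based quadrature results that follow.
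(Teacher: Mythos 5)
Your proof is correct; it rests on exactly the same one-step inequalities as the paper's, but aggregates them in a dual way. The paper keeps the $n$ difference inequalities $f(x_{i-1})-f(x_i)\leq\Phi\big(\frac{b-a}{n}\big)$, multiplies them by $1,3,\ldots,2n-1$ (by $2n-1,2n-3,\ldots,1$ for the lower bound) and sums, so the telescoping happens inside the weighted sum and the error coefficient is $\sum_{i=1}^{n}(2i-1)=n^2$. You telescope first, obtaining the pointwise bounds $f(x_i)\leq f(b)+(n-i)\Phi(h)$ and $f(x_i)\geq f(a)-i\Phi(h)$ with $h=\frac{b-a}{n}$, and then sum against the trapezoidal weights $1,2,\ldots,2,1$, arriving at the same coefficient $n+2\sum_{i=1}^{n-1}(n-i)=n^2$. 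The two computations are summation-by-parts transposes of one another: the paper's multiplier $2i-1$ on the $i$-th difference inequality is precisely the cumulative trapezoidal weight of the nodes $x_0,\ldots,x_{i-1}$, a fact your bookkeeping makes explicit and which demystifies the otherwise unmotivated choice of odd multipliers. What the paper's formulation buys is a uniform template reused verbatim for Simpson's and Simpson's $3/8$ rules (Theorems~\ref{T2} and~\ref{T3}), where the multipliers are the partial sums of the weights $1,4,2,4,\ldots$ and $1,3,3,2,\ldots$; your version would handle those cases just as easily by pairing the telescoped bounds with the corresponding weights. Your closing remark is also accurate: $\mathcal{T}_{n}(f)$ involves only the nodal values, so continuity of $f$ is never used in this particular argument and is assumed only for coherence with the standard setting of the quadrature formulas.
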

\begin{proof}
For the proof, we consider partitioning the interval $I$ into $n$ equal sub-intervals as follows
\begin{equation}\label{part}
P=\{ a=x_0<x_1<\cdots <x_n=b\}\quad\mbox{with}\quad x_i-x_{i-1}=\frac{b-a}{n},\quad i\in\{1,\ldots,n\}.
\end{equation}
By applying $\Phi$-monotonicity in each of the subintervals $[x_{i-1},x_i]$ of $I$, we get the following system of $n$ inequalities
\begin{equation}\label{q1}
\begin{aligned}
f(x_0)-f(x_1)&\leq  \Phi\Big(\frac{b-a}{n}\Big)\\
&\vdots\\
f(x_{n-1})-f(x_n)&\leq \Phi\Big(\frac{b-a}{n}\Big).
\end{aligned}
\end{equation}
First multiplying each inequalities of the above system respectively by
$1,3,5,\ldots,2n-1$
and then summing those up, we arrive at
\begin{align*}
f(x_0)+2\sum_{i=1}^{n-1}f(x_i)+f(x_n)
&\leq 2nf(x_n)+\bigg(\sum_{i=1}^{n}(2i-1)\bigg)\Phi\Big(\frac{b-a}{n}\Big)\\
&=2nf(x_n)+n^2\Phi\Big(\frac{b-a}{n}\Big).
\end{align*}
Now multiplying both side of the above inequality by $\dfrac{b-a}{2n}$ and using the definition of the Trapezoidal
rule, we obtain
$$
\mathcal{T}_{n}(f)\leq (b-a)\bigg(f(b)+ \frac{n}{2}\Phi\Big(\frac{b-a}{n}\Big)\bigg).
$$
Dividing both side by  $b-a$, we get the second part of inequality \eq{T}.

Similarly, multiplying each inequalities of \eq{q1} by
$2n-1,2n-3,\ldots,1$, respectively,
and then adding the resultant inequalities, we arrive at
$$
(2n)f(x_0)-\bigg(\sum_{i=1}^{n}\big(2(n-i)+1\big)\bigg)\Phi\Big(\frac{b-a}{n}\Big)\leq f(x_0)+2\sum_{i=1}^{n-1}f(x_i)+f(x_n).
$$
As before, we multiply the both sides of this inequality by $\dfrac{b-a}{2n}$ and utilizing the definition of
the Trapezoidal rule, we get
$$
(b-a)\bigg(f(a)- \frac{n}{2}\Phi\Big(\frac{b-a}{n}\Big)\bigg)\leq\mathcal{T}_{n}(f).
$$
This yields the first part of inequality \eq{T} and establishes
the statement.
\end{proof}
Next we are going to see result associated with trapezoidal integral mean for the $\Phi$-H\"older functions.
\begin{corollary}\label{C1}
Let $f:I\to\R$ be a continuous $\Phi$-H\"older function. Then for any $n\in\N$, the following inequalities hold
\begin{equation}\label{T5}
\max\{f(a),f(b)\}-\frac{n}{2}\Phi\Big(\frac{b-a}{n}\Big)\leq\frac{1}{b-a}\mathcal{T}_{n}(f)\leq \min\{f(a),f(b)\}+ \frac{n}{2}\Phi\Big(\frac{b-a}{n}\Big).
\end{equation}
\end{corollary}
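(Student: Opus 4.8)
The plan is to derive the corollary directly from Theorem~\ref{T1} by exploiting the defining symmetry of the $\Phi$-H\"older class. The key observation is that $f$ is $\Phi$-H\"older precisely when the inequality $|f(x)-f(y)|\leq\Phi(|y-x|)$ holds, which is equivalent to saying that \emph{both} $f$ and $-f$ are $\Phi$-monotone. Since the hypotheses of Theorem~\ref{T1} are satisfied by $f$ as well as by $-f$ (continuity is preserved under negation), I would apply that theorem twice, to $f$ and to $-f$, and then combine the two resulting pairs of bounds.

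First I would simply record the bounds given by Theorem~\ref{T1} applied to $f$, namely $f(a)-\tfrac{n}{2}\Phi\big(\tfrac{b-a}{n}\big)\leq\tfrac{1}{b-a}\mathcal{T}_{n}(f)\leq f(b)+\tfrac{n}{2}\Phi\big(\tfrac{b-a}{n}\big)$. Next, I would apply Theorem~\ref{T1} to the function $-f$. Here the two facts I need are that the Trapezoidal rule is linear, so that $\mathcal{T}_{n}(-f)=-\mathcal{T}_{n}(f)$, and that $(-f)(a)=-f(a)$, $(-f)(b)=-f(b)$. Substituting these into the conclusion of Theorem~\ref{T1} for $-f$ and then multiplying through by $-1$ (which reverses the inequalities) yields the companion chain $f(b)-\tfrac{n}{2}\Phi\big(\tfrac{b-a}{n}\big)\leq\tfrac{1}{b-a}\mathcal{T}_{n}(f)\leq f(a)+\tfrac{n}{2}\Phi\big(\tfrac{b-a}{n}\big)$.

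The final step is a purely logical combination. I now have two valid lower bounds for $\tfrac{1}{b-a}\mathcal{T}_{n}(f)$, namely $f(a)-\tfrac{n}{2}\Phi\big(\tfrac{b-a}{n}\big)$ and $f(b)-\tfrac{n}{2}\Phi\big(\tfrac{b-a}{n}\big)$; taking the larger of the two is still a lower bound, which produces the left-hand side $\max\{f(a),f(b)\}-\tfrac{n}{2}\Phi\big(\tfrac{b-a}{n}\big)$. Symmetrically, I have two valid upper bounds, $f(a)+\tfrac{n}{2}\Phi\big(\tfrac{b-a}{n}\big)$ and $f(b)+\tfrac{n}{2}\Phi\big(\tfrac{b-a}{n}\big)$; taking the smaller of the two is still an upper bound, giving the right-hand side $\min\{f(a),f(b)\}+\tfrac{n}{2}\Phi\big(\tfrac{b-a}{n}\big)$. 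This establishes \eqref{T5}.

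Honestly, I do not anticipate a genuine obstacle here: the entire argument is bookkeeping built on top of Theorem~\ref{T1}, and the only point that requires any care is the sign flip when passing from $-f$ back to $f$, where the direction of the inequalities must be reversed correctly. The one structural ingredient worth stating explicitly (rather than leaving implicit) is the linearity identity $\mathcal{T}_{n}(-f)=-\mathcal{T}_{n}(f)$, which is immediate from the definition of $\mathcal{T}_{n}$ as a weighted sum of function values.
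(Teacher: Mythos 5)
Your proposal is correct and follows essentially the same route as the paper: the published proof also applies Theorem~\ref{T1} to $-f$ (valid since $\Phi$-H\"olderness means both $f$ and $-f$ are $\Phi$-monotone), obtains the companion chain $f(b)-\tfrac{n}{2}\Phi\big(\tfrac{b-a}{n}\big)\leq\tfrac{1}{b-a}\mathcal{T}_{n}(f)\leq f(a)+\tfrac{n}{2}\Phi\big(\tfrac{b-a}{n}\big)$, and combines it with \eqref{T} to get the max/min bounds. Your only addition is making explicit the linearity identity $\mathcal{T}_{n}(-f)=-\mathcal{T}_{n}(f)$, which the paper leaves implicit.
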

\begin{proof}
For the proof, we consider partitioning the interval $I$ as defined in \eq{part}. Since $f$ is $\Phi$-H\"older;
the function $(-f)$ also possesses $\Phi$-monotonicity. Therefore by Theorem~\ref{T1}, substituting $-f$ instead of $f$ in \eq{T}, we obtain
$$
f(b)-\frac{n}{2}\Phi\Big(\frac{b-a}{n}\Big)\leq\frac{1}{b-a}\mathcal{T}_{n}(f)\leq f(a)+ \frac{n}{2}\Phi\Big(\frac{b-a}{n}\Big).
$$
This together with \eq{T} yields the inequality \eq{T5} and completes the proof.
\end{proof}
The next corollary shows that under a usual characteristics of error function $\Phi$, the dependency of $n$ can be relaxed in the bounds of the numerical integral. We say that the error function $\Phi$ is superadditive if
$\Phi(x)+\Phi(y)\leq \Phi(x+y)$ holds for any $x,y$ and $x+y\in [0,\ell(I)]$.
\begin{corollary}\label{C717}
Let $f:I\to\R$ be a continuous $\Phi$-monotone function where $\Phi$ possesses supperadditivity. Then for any $n\in\N$, the following inequalities hold
\begin{equation}\label{T56}
f(a)-\dfrac{1}{2}\Phi(b-a)\leq \dfrac{1}{b-a}\mathcal{T}_{n}(f)\leq f(b)+\dfrac{1}{2}\Phi(b-a).
\end{equation}
\end{corollary}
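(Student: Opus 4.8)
The plan is to derive the result directly from Theorem~\ref{T1} by absorbing the factor $n$ into the error term using superadditivity. Theorem~\ref{T1} already furnishes, for every $n\in\N$, the bounds
$$
f(a)-\frac{n}{2}\Phi\Big(\frac{b-a}{n}\Big)\leq\frac{1}{b-a}\mathcal{T}_{n}(f)\leq f(b)+ \frac{n}{2}\Phi\Big(\frac{b-a}{n}\Big),
$$
so it suffices to show that the scale-dependent coefficient $\frac{n}{2}\Phi\big(\frac{b-a}{n}\big)$ never exceeds the scale-free quantity $\frac12\Phi(b-a)$; equivalently, that
$$
n\,\Phi\Big(\frac{b-a}{n}\Big)\leq \Phi(b-a).
$$

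First I would establish this last inequality by induction. Writing $t=\frac{b-a}{n}$, superadditivity gives $\Phi(t)+\Phi(t)\leq\Phi(2t)$, and more generally $k\,\Phi(t)\leq\Phi(kt)$ for every $k\in\{1,\ldots,n\}$, \emph{provided} the argument $kt$ remains in the domain $[0,\ell(I)]$ on which $\Phi$ is defined. The only point requiring care is precisely this domain condition: since $\ell(I)=b-a$ and $kt=k\frac{b-a}{n}\leq b-a$ for $k\leq n$, each application of superadditivity is legitimate, and the induction terminates at $k=n$ with $n\,\Phi(t)\leq\Phi(nt)=\Phi(b-a)$.

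With the key inequality in hand, the conclusion is immediate by chaining. For the upper bound, $\frac{1}{b-a}\mathcal{T}_{n}(f)\leq f(b)+\frac{n}{2}\Phi\big(\frac{b-a}{n}\big)\leq f(b)+\frac12\Phi(b-a)$. For the lower bound, the same estimate gives $f(a)-\frac12\Phi(b-a)\leq f(a)-\frac{n}{2}\Phi\big(\frac{b-a}{n}\big)\leq\frac{1}{b-a}\mathcal{T}_{n}(f)$, where the first inequality uses $\frac{n}{2}\Phi\big(\frac{b-a}{n}\big)\leq\frac12\Phi(b-a)$ and the second is the lower estimate of Theorem~\ref{T1}. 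Together these yield \eq{T56}. I expect no genuine obstacle here beyond the bookkeeping of the domain condition in the induction; the entire content of the corollary is the observation that superadditivity collapses the dependence on $n$ in the bounds of Theorem~\ref{T1}.
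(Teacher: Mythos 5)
Your proposal is correct and follows the same route as the paper: the paper's proof also reduces the corollary to the key inequality $n\,\Phi\big(\frac{b-a}{n}\big)\leq\Phi(b-a)$, obtained by iterating superadditivity (written there as $\sum_{i=1}^{n}\Phi\big(\frac{b-a}{n}\big)\leq\Phi\big(\sum_{i=1}^{n}\frac{b-a}{n}\big)$), and then substitutes it into the bounds of Theorem~\ref{T1}. Your explicit check that the arguments $k\frac{b-a}{n}$ stay within the domain $[0,\ell(I)]$ is a point the paper leaves implicit, but it is not a substantive difference.
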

\begin{proof}
Since $\Phi:[0,\ell(I)]\to \R_+$ is supperadditive, for any $n\in\N$, it satisfies the following functional inequality
\begin{equation}\label{8765}
n\Phi\bigg(\dfrac{b-a}{n}\bigg)=\sum_{i=1}^{n}\Phi\bigg(\dfrac{b-a}{n}\bigg)\leq \Phi\bigg(\sum_{i=1}^{n}\dfrac{b-a}{n}\bigg)=\Phi(b-a).
\end{equation}
Using the above inequality at \eq{T} of Theorem~\ref{T1}, we obtain the desired result.
\end{proof}
The above corollary can also be be formulated for Theorems~\ref{T2} and \ref{T3} under necessary assumptions on $n$.
The next theorems replicate the similar result for Simpson's
and Simpson's $3/8$
rules.
\begin{theorem}\label{T2}
Let $f:I\to\R$ be a continuous a $\Phi$-monotone function. Then for any even number $n\in \N$, the following inequalities hold
\begin{equation}\label{TTTTTT}
f(a)-\frac{n}{2}\Phi\Big(\frac{b-a}{n}\Big)\leq\frac{1}{b-a}\mathcal{S}_{n}
(f)\leq f(b)+ \frac{n}{2}\Phi\Big(\frac{b-a}{n}\Big).
\end{equation}
\end{theorem}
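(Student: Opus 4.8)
The plan is to follow the proof of Theorem~\ref{T1} verbatim at the structural level, replacing only the trapezoidal multipliers by ones adapted to Simpson's rule. I would begin from the same equidistant partition \eq{part} and the same system of inequalities \eq{q1}, obtained by applying $\Phi$-monotonicity on each subinterval $[x_{i-1},x_i]$. The whole argument then hinges on choosing the correct sequence of multipliers: in Theorem~\ref{T1} the numbers $1,3,5,\dots,2n-1$ are precisely the partial sums of the trapezoidal coefficients $1,2,2,\dots,2$, and this is exactly what makes the weighted telescoping of \eq{q1} reproduce the trapezoidal bracket. Accordingly, for Simpson's rule I would take the partial sums of the Simpson coefficients $1,4,2,4,2,\dots,4,1$; writing $s_0=s_n=1$, $s_k=4$ for odd $k$ and $s_k=2$ for even interior $k$, these multipliers are $c_i:=\sum_{k=0}^{i-1}s_k$, explicitly $1,5,7,11,13,\dots,3n-1$, whose consecutive differences alternate between $4$ and $2$.

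For the upper bound I would multiply the $i$-th inequality of \eq{q1} by $c_i$ and add. In the resulting sum the coefficient of $f(x_k)$ is $c_{k+1}-c_k=s_k$ for $1\le k\le n-1$, the coefficient of $f(x_0)$ is $c_1=1$, and that of $f(x_n)$ is $-c_n=-(3n-1)$. Adding $(c_n+1)f(x_n)=3n\,f(x_n)$ to both sides turns the left-hand side into exactly the Simpson bracket
$$
f(x_0)+2\sum_{i=1}^{(n/2)-1}f(x_{2i})+4\sum_{i=1}^{n/2}f(x_{2i-1})+f(x_n),
$$
while the right-hand side becomes $3n\,f(x_n)+\big(\sum_{i=1}^{n}c_i\big)\Phi\Big(\frac{b-a}{n}\Big)$. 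Multiplying by $\frac{b-a}{3n}$ and using the definition of $\mathcal{S}_n(f)$ yields the upper estimate in \eq{TTTTTT}, \emph{provided} $\sum_{i=1}^{n}c_i=\frac{3n^2}{2}$. For the lower bound I would instead multiply \eq{q1} by the reversed multipliers $d_i:=c_{n+1-i}$, that is $3n-1,\dots,11,7,5,1$. Since the Simpson coefficients are palindromic, the interior telescoping coefficients now equal $-s_k$, the coefficient of $f(x_0)$ is $d_1=c_n=3n-1$, and that of $f(x_n)$ is $-d_n=-1$; adding $f(x_0)$ to both sides produces the negative Simpson bracket, and the same scaling gives $\mathcal{S}_n(f)\ge(b-a)\big(f(a)-\frac{n}{2}\Phi\big(\frac{b-a}{n}\big)\big)$, the lower estimate in \eq{TTTTTT}.

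The one genuine computation, and the step I expect to be the main obstacle, is confirming that $\sum_{i=1}^{n}c_i=\frac{3n^2}{2}$ for every even $n$; this identity is exactly what collapses the error term to the same $\frac{n}{2}\Phi\big(\frac{b-a}{n}\big)$ as in the trapezoidal case. I would verify it by splitting the sum according to the parity of the index: the odd-indexed multipliers are $c_{2j+1}=6j+1$ (for $j=0,\dots,\frac{n}{2}-1$) and the even-indexed ones are $c_{2j}=6j-1$ (for $j=1,\dots,\frac{n}{2}$), so the two arithmetic progressions sum to $3(n/2)^2-n$ and $3(n/2)^2+n$, whose total is $6(n/2)^2=\frac{3n^2}{2}$. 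Everything else is the routine bookkeeping already carried out in Theorem~\ref{T1}.
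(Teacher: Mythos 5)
Your proof is correct and is essentially the paper's own argument: your multipliers $c_i$ (partial sums of the Simpson coefficients, i.e.\ $c_{2i-1}=6i-5$, $c_{2i}=6i-1$) are exactly the weights $6i-5$ and $6i-1$ the paper applies to the odd- and even-numbered inequalities of \eq{q1}, your reversed sequence matches its weights $6(\frac n2-i)+5$ and $6(\frac n2-i)+1$ for the lower bound, and your identity $\sum_{i=1}^n c_i=\frac{3n^2}{2}$ is the paper's computation $\sum_{i=1}^{n/2}(6i-1)+\sum_{i=1}^{n/2}(6i-5)=\frac32 n^2$. The only cosmetic difference is that you motivate the multipliers as cumulative Simpson weights rather than positing them outright.
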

\begin{proof}
To prove the statement, we consider partitioning the interval $I$ into $n$ equal sub-intervals as mentioned in \eq{part}.
Utilizing $\Phi$-monotonicity of $f$ in each of the subintervals $[x_{i-1},x_i]$ of $I$, we get the system of $n$ inequalities as mentioned in \eq{q1}.

Now we multiply both sides of the odd numbered inequalities of \eq{q1}
by $(6i-5)$ and even numbered inequalities  by $(6i-1)$ respectively for $i=1,\ldots,\frac{n}{2}$ and summing up those side by side, we obtain
$$
\sum_{i=1}^{n/2}
\big[f(x_{2i-2})+4f(x_{2i-1})+f(x_{2i})\big]\leq 3nf(x_n)+\Phi\Big(\dfrac{b-a}{n}\Big)\Bigg(\sum_{i=1}^{n/2}(6i-1)+\sum_{i=1}^{n/2}(6i-5)\Bigg).
$$
It can be easily observable that
$\displaystyle\sum_{i=1}^{n/2}(6i-1)+\displaystyle\sum_{i=1}^{n/2}(6i-5)=\dfrac{3}{2}n^2$.
Upon multiplying  the above inequality by $\dfrac{b-a}{3n}$ and using the definition of Simpson's
rule we get
$$
\mathcal{S}_{n}
(f)\leq (b-a)\bigg(f(b)+ \frac{n}{2}\Phi\Big(\frac{b-a}{n}\Big)\bigg).
$$
Dividing both side of it by  $b-a$, we
obtain the right most inequality of \eq{TTTTTT}.

To show the initial part of inequality \eq{TTTTTT}, once again we consider the system of inequalities \eq{q1}.
First we multiply both sides of odd numbered inequalities by
$6(\frac{n}{2}-i)+5$
and then even numbered inequalities by
$6(\frac{n}{2}-i)+1$
for
$i=1,2,\ldots,\frac{n}{2}$ respectively.
After adding up the resultant inequalities side by side, we arrive at
\begin{align*}
3nf(x_0)-\Phi\Big(\dfrac{b-a}{n}\Big)\Bigg(\sum_{i=1}^{n/2}\Big(6\Big(\frac{n}{2}-i\Big)+1\Big)+&\sum_{i=1}^{n/2}\Big(6\Big(\frac{n}{2}-i\Big)+5\Big)\Bigg)\\
&\leq
\sum_{i=1}^{n/2}\big[f(x_{2i-2})+4f(x_{2i-1})+f(x_{2i})\big].
\end{align*}
Multiplying the above inequality by $\dfrac{b-a}{3n}$ and using the definition of Simpson's
rule we obtain
$$
(b-a)\bigg(f(a)-\frac{n}{2}\Phi\Big(\frac{b-a}{n}\Big)\bigg)\leq\mathcal{S}_{n}
(f).
$$
This shows the first inequality part of \eq{TTTTTT} and establishes the result.
\end{proof}
The corollary below can be derived by using the above theorem. Hence the proof of it is not included.
\begin{corollary}\label{C2}
Let $f:I\to\R$ be a continuous $\Phi$-H\"older function. Then for any even number $n\in\N$, the following inequalities hold
$$
\max\{f(a),f(b)\}-\frac{n}{2}\Phi\Big(\frac{b-a}{n}\Big)\leq\frac{1}{b-a}\mathcal{S}_{n}
(f)\leq \min\{f(a),f(b)\}+ \frac{n}{2}\Phi\Big(\frac{b-a}{n}\Big).
$$
\end{corollary}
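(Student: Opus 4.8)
The plan is to imitate, step for step, the derivation of Corollary~\ref{C1}, simply replacing the Trapezoidal estimate of Theorem~\ref{T1} by its Simpson's-rule counterpart, Theorem~\ref{T2}. The guiding observation is that $\Phi$-H\"olderness of $f$ is exactly the statement that both $f$ and $-f$ are $\Phi$-monotone. Hence Theorem~\ref{T2} applies to each of them separately, and the desired two-sided bound will come from merging the two resulting double inequalities.

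First I would partition $I$ as in \eq{part} and apply Theorem~\ref{T2} directly to $f$; this is legitimate since $n$ is even and $f$ is $\Phi$-monotone, and it produces
$$
f(a)-\frac{n}{2}\Phi\Big(\frac{b-a}{n}\Big)\leq\frac{1}{b-a}\mathcal{S}_{n}(f)\leq f(b)+ \frac{n}{2}\Phi\Big(\frac{b-a}{n}\Big).
$$
Next I would apply the very same theorem to $-f$, which is also $\Phi$-monotone. Because the Simpson functional $\mathcal{S}_{n}$ is a fixed linear combination of the sampled values $f(x_i)$, it is linear, so $\mathcal{S}_{n}(-f)=-\mathcal{S}_{n}(f)$. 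Substituting $-f$ into Theorem~\ref{T2} and then multiplying the whole chain by $-1$ (which reverses the two inequalities) yields
$$
f(b)-\frac{n}{2}\Phi\Big(\frac{b-a}{n}\Big)\leq\frac{1}{b-a}\mathcal{S}_{n}(f)\leq f(a)+ \frac{n}{2}\Phi\Big(\frac{b-a}{n}\Big).
$$

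Finally I would combine the two chains. The quantity $\frac{1}{b-a}\mathcal{S}_{n}(f)$ now dominates both lower bounds $f(a)-\frac{n}{2}\Phi(\frac{b-a}{n})$ and $f(b)-\frac{n}{2}\Phi(\frac{b-a}{n})$, hence it dominates their maximum; dually, it is bounded above by both $f(b)+\frac{n}{2}\Phi(\frac{b-a}{n})$ and $f(a)+\frac{n}{2}\Phi(\frac{b-a}{n})$, hence by their minimum. Replacing $f(a),f(b)$ by $\max\{f(a),f(b)\}$ on the left and by $\min\{f(a),f(b)\}$ on the right gives precisely the claimed estimate.

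I do not expect any genuine obstacle: once Theorem~\ref{T2} is available the argument is purely formal, and the whole content is the bookkeeping already carried out in Corollary~\ref{C1}. The only point deserving a moment's attention is the linearity of $\mathcal{S}_{n}$ together with the sign reversal in passing from $-f$ back to $f$; this is doubtless why the authors state that the proof can be omitted.
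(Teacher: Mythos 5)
Your proposal is correct and is precisely the argument the paper intends: the authors omit the proof of Corollary~\ref{C2} because it follows from Theorem~\ref{T2} exactly as Corollary~\ref{C1} follows from Theorem~\ref{T1}, namely by applying the theorem to both $f$ and $-f$ (using $\mathcal{S}_{n}(-f)=-\mathcal{S}_{n}(f)$) and merging the two resulting chains into the $\max$/$\min$ bounds. Your bookkeeping, including the sign reversal when passing from $-f$ back to $f$, matches the paper's route in every essential respect.
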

The next corollary shows that in case the error function $\Phi$ is superadditive,
then we can obtain
bounds for the numerical integral mean of $f$, which are
independent of $n$.
\begin{corollary}\label{C7117}
Let $f:I\to\R$ be a continuous $\Phi$-H\"older function where $\Phi$ possesses supperadditivity.
Then
$$
\max\{f(a),f(b)\}-\dfrac{1}{2}\Phi(b-a)\leq \dfrac{1}{b-a}\mathcal{S}_{n}
(f)\leq \min\{f(a),f(b)\}+\dfrac{1}{2}\Phi(b-a)
$$
hold for any even number $n\in \N$.
\end{corollary}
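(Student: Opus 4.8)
The plan is to obtain this corollary as an immediate consequence of Corollary~\ref{C2}, by invoking the superadditivity of $\Phi$ in precisely the same way that Corollary~\ref{C717} was derived from Theorem~\ref{T1}. Since $f$ is a continuous $\Phi$-H\"older function and $n$ is even, Corollary~\ref{C2} already supplies the $n$-dependent bounds
$$
\max\{f(a),f(b)\}-\frac{n}{2}\Phi\Big(\frac{b-a}{n}\Big)\leq\frac{1}{b-a}\mathcal{S}_{n}(f)\leq \min\{f(a),f(b)\}+\frac{n}{2}\Phi\Big(\frac{b-a}{n}\Big),
$$
so the entire task reduces to replacing the $n$-dependent error term $\frac{n}{2}\Phi\big(\frac{b-a}{n}\big)$ by the $n$-independent term $\frac{1}{2}\Phi(b-a)$.

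The key step is the superadditivity estimate \eq{8765}, namely $n\Phi\big(\frac{b-a}{n}\big)\leq\Phi(b-a)$, which is valid for every $n\in\N$ because the intermediate arguments $\frac{k(b-a)}{n}$, $k=1,\ldots,n$, all lie in $[0,\ell(I)]$. Halving it yields $\frac{n}{2}\Phi\big(\frac{b-a}{n}\big)\leq\frac{1}{2}\Phi(b-a)$. I would then feed this into both bounds of Corollary~\ref{C2}, paying attention to the direction of the inequalities: in the upper bound the error term is added, so enlarging it preserves the estimate; in the lower bound the error term is subtracted, so enlarging it merely weakens the lower bound, which therefore still holds. This produces the claimed $n$-independent inequalities.

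No genuine obstacle is expected, as the argument is a single application of superadditivity to an already-established result, completely parallel to the passage from Theorem~\ref{T1} to Corollary~\ref{C717}. The only points requiring a moment of care are the restriction that $n$ be even (inherited from the validity of Simpson's rule $\mathcal{S}_{n}$) and the sign bookkeeping for the lower bound described above.
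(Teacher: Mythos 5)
Your proposal is correct and matches the paper's own argument exactly: the paper likewise derives Corollary~\ref{C7117} directly from Corollary~\ref{C2} together with the superadditivity estimate \eq{8765}. Your additional remarks on sign bookkeeping and the evenness of $n$ are sound refinements of the same one-line deduction.
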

\begin{proof}
The statement is a direct consequence of Corollary~\ref{C2} and \eq{8765}.
\end{proof}
We can construct similar results for $\Phi$-H\"older functions for the Trapezoidal and Simpson's
$3/8$ rule under the appropriate restriction/relaxation on the value of $n$.
\begin{theorem}\label{T3}
Let $f:I\to\R$ be a continuous $\Phi$-monotone function. Then for any number $n\in \N$
which is a multiple of 3, the inequalities
\begin{equation}\label{SS}
		f(a)-\frac{n}{2}\Phi\Big(\frac{b-a}{n}\Big)\leq\frac{1}{b-a}\mathcal{S}_{n}^{\frac{3}{8}}(f)dx\leq f(b)+ \frac{n}{2}\Phi\Big(\frac{b-a}{n}\Big)
\end{equation}
hold.
\end{theorem}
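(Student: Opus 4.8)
The plan is to reuse verbatim the telescoping scheme from the proofs of Theorems~\ref{T1} and \ref{T2}, now tuned to the period-three weight pattern of Simpson's $3/8$ rule. First I would partition $I$ into $n$ equal subintervals as in \eq{part} and apply $\Phi$-monotonicity on each $[x_{i-1},x_i]$ to obtain the same system of $n$ inequalities \eq{q1}. The point that controls everything is the coefficient structure: when the summand $f(x_{3i-3})+3f(x_{3i-2})+3f(x_{3i-1})+f(x_{3i})$ is expanded over $i=1,\ldots,n/3$, the coefficient $c_k$ of $f(x_k)$ equals $1$ for $k=0$ and $k=n$, equals $2$ whenever $k$ is a positive multiple of $3$ strictly smaller than $n$ (the shared block endpoints), and equals $3$ otherwise. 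The telescoping multipliers are forced to be the partial sums of these $c_k$.

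For the upper bound I would multiply the three inequalities of \eq{q1} lying in the $i$-th block, namely those indexed by $3i-2$, $3i-1$, $3i$, by $8i-7$, $8i-4$ and $8i-1$ respectively (for $i=1,\ldots,n/3$), and add them side by side. A short computation gives
\[
\sum_{i=1}^{n/3}\big[(8i-7)+(8i-4)+(8i-1)\big]=\sum_{i=1}^{n/3}(24i-12)=\frac{4n^2}{3},
\]
while the weighted differences on the left telescope to $\sum_{k=0}^{n}c_k f(x_k)-\frac{8n}{3}f(x_n)$, since the block multipliers increase by $3,3,2$ in accordance with the $c_k$-pattern. Rearranging yields
\[
\sum_{i=1}^{n/3}\big[f(x_{3i-3})+3f(x_{3i-2})+3f(x_{3i-1})+f(x_{3i})\big]\leq\frac{8n}{3}f(x_n)+\frac{4n^2}{3}\Phi\Big(\frac{b-a}{n}\Big).
\]
Multiplying by $\frac{3(b-a)}{8n}$, invoking the definition of $\mathcal{S}_{n}^{\frac{3}{8}}$, and dividing by $b-a$ produces the right-hand inequality of \eq{SS}.

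For the lower bound I would repeat the argument with the reflected multipliers, multiplying the inequalities indexed by $3i-2$, $3i-1$, $3i$ by $8\big(\frac{n}{3}-i\big)+7$, $8\big(\frac{n}{3}-i\big)+4$ and $8\big(\frac{n}{3}-i\big)+1$ respectively. Their total is again $\frac{4n^2}{3}$, but the telescoping now isolates $\frac{8n}{3}f(x_0)$, giving $\sum_{k=0}^{n}c_k f(x_k)\geq\frac{8n}{3}f(x_0)-\frac{4n^2}{3}\Phi\big(\frac{b-a}{n}\big)$, and after the same normalization this is precisely the left-hand inequality of \eq{SS}.

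The only real obstacle I anticipate is the bookkeeping: correctly identifying the block multipliers $8i-7,8i-4,8i-1$ (and their reflections) as the partial sums dictated by the $3,3,2$ coefficient pattern, and verifying that their sum collapses to exactly $\frac{4n^2}{3}$. That value is the one the argument needs, since after multiplication by $\frac{3(b-a)}{8n}$ the error term reduces to $\frac{n}{2}\Phi\big(\frac{b-a}{n}\big)$; everything else is the routine telescoping already executed for the Trapezoidal and Simpson rules.
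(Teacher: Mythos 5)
Your proposal is correct and follows essentially the same route as the paper's own proof: the same partition and system \eq{q1}, the same triplet multipliers $8i-7,\,8i-4,\,8i-1$ (and their reflections $8(\frac{n}{3}-i)+7,\,8(\frac{n}{3}-i)+4,\,8(\frac{n}{3}-i)+1$ for the lower bound), the same total $\frac{4}{3}n^2$ collapsing to the error term $\frac{n}{2}\Phi\big(\frac{b-a}{n}\big)$ after multiplication by $\frac{3(b-a)}{8n}$. Your explicit identification of the multipliers as partial sums of the Simpson $3/8$ coefficient pattern $1,3,3,2,3,3,2,\ldots$ is a nice conceptual gloss that the paper leaves implicit, but the argument is identical.
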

\begin{proof}
To prove the statement, we consider partitioning the interval $I$ into $n$ equal sub-intervals as mentioned in \eq{part}.
Utilizing $\Phi$-monotonicity of $f$ in each of the subintervals $[x_{i-1},x_i]$ of $I$, we get the system of $n$ inequalities as mentioned in \eq{q1}.
Since
$n$ is a multiple of 3,
we orderly rearrange the system of inequalities in $\dfrac{n}{3}$ triplets as follows
\begin{equation}\label{090}\begin{aligned}
f(x_{3k-3})&\leq f(x_{3k-2})+\Phi\Big(\dfrac{b-a}{n}\Big)\\
f(x_{3k-2})&\leq f(x_{3k-1})+\Phi\Big(\dfrac{b-a}{n}\Big)\quad\quad\quad\quad \bigg(k=1,2,\ldots,n/3\bigg)\\
f(x_{3k-1})&\leq f(x_{3k})+\Phi\Big(\dfrac{b-a}{n}\Big).
\end{aligned}\end{equation}
Now initially, we multiply the first, second and 3rd inequalities of $k^{th}$ triplet respectively by $8k-7$, $8k-4$ and $8k-1$. After summing up all resultant inequalities side by side, we arrive at the following
\begin{align*}
\sum_{k=1}^{n/3}\Big((8k-7)f(x_{3k-3})&+3f(x_{3k-2})+3f(x_{3k-1})-(8k-1)f(x_{3k})\Big)\\
&\leq\bigg(\sum_{k=1}^{n/3}(24k-12)\bigg)\Phi\Big(\dfrac{b-a}{n}\Big).
\end{align*}
One can easily see that $\displaystyle\sum_{k=1}^{n/3}(24k-12)=\dfrac{4}{3}n^2.$
Now rewriting the above inequality, we get
$$
\sum_{i=1}^{n/3}\Big(f(x_{3i-3})+3f(x_{3i-2})+3f(x_{3i-1})+f(x_{3i})\Big)
\leq \dfrac{8n}{3}f(b)+\dfrac{4}{3}n^2\Phi\Big(\dfrac{b-a}{n}\Big).
$$
Multiplying both sides of the above inequality by $\dfrac{3(b-a)}{8n}$ and using the definition of Simpson's
$3/8$ rule, we obtain the following functional inequality
$$
\mathcal{S}_{n}^{\frac{3}{8}}(f)\leq (b-a)\bigg(f(b)+ \frac{n}{2}\Phi\Big(\frac{b-a}{n}\Big)\bigg).
$$
By diving both sides of it by $(b-a)$ we get the right most inequality of \eq{SS}.

To obtain the first part of the inequality, we
multiplying the first, second and 3rd inequalities of $k^{th}$ triplet in \eq{090} by $8(n/3-k)+7$, $8(n/3-k)+4$ and $8(n/3-k)+1$ respectively for $k=1,\ldots ,n/3$. After that adding up all the inequalities side by side, we arrive at the following
$$
\dfrac{8n}{3}f(a)-\dfrac{4}{3}n^2{\Phi\Big(\dfrac{b-a}{n}\Big)}\leq
\sum_{i=1}^{n/3}\Big[f(x_{3i-3})+3f(x_{3i-2})+3f(x_{3i-1})+f(x_{3i})\Big].
$$
Now we multiply  both sides of it by $\dfrac{3(b-a)}{8n}$ and using the definition of Simpson's
{$3/8$} rule, we obtain the functional inequality below
$$
(b-a)\bigg(f(a)- \frac{n}{2}\Phi\Big(\frac{b-a}{n}\Big)\bigg)\leq\mathcal{S}_{n}^{\frac{3}{8}}(f).
$$
This yields the first part of inequality \eq{SS} and
completes the proof.
\end{proof}
The establishment of the following result is a direct implication of the above theorem. For that reason, we just state the statement.
\begin{corollary}\label{C3}
Let $f:I\to\R$ be a continuous $\Phi$-H\"older function. Then for any number $n\in \N$ which is a multiple of 3, the function $f$ satisfies the following functional inequalities
$$
\max\{f(a),f(b)\}-\frac{n}{2}\Phi\Big(\frac{b-a}{n}\Big)\leq\frac{1}{b-a}\mathcal{S}_{n}^{\frac{3}{8}}(f)\leq \min\{f(a),f(b)\}+ \frac{n}{2}\Phi\Big(\frac{b-a}{n}\Big).
$$
\end{corollary}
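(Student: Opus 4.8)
The plan is to mimic the derivation of Corollary~\ref{C1} from Theorem~\ref{T1}, now using Theorem~\ref{T3} as the engine. Recall that a $\Phi$-H\"older function $f$ satisfies $|f(x)-f(y)|\leq\Phi(|y-x|)$ for all $x,y\in I$, so that \emph{both} $f$ and $-f$ are $\Phi$-monotone. Since Theorem~\ref{T3} applies to any continuous $\Phi$-monotone function and $n$ is assumed to be a multiple of $3$ throughout, I may feed each of these two functions into \eq{SS}.

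First I would apply Theorem~\ref{T3} directly to $f$, which gives
$$
f(a)-\frac{n}{2}\Phi\Big(\frac{b-a}{n}\Big)\leq\frac{1}{b-a}\mathcal{S}_{n}^{\frac{3}{8}}(f)\leq f(b)+ \frac{n}{2}\Phi\Big(\frac{b-a}{n}\Big).
$$
Next I would apply the same theorem to $-f$. The only elementary observation needed is that the Simpson $3/8$ functional is linear in its argument, so $\mathcal{S}_{n}^{\frac{3}{8}}(-f)=-\mathcal{S}_{n}^{\frac{3}{8}}(f)$; substituting this into \eq{SS} written for $-f$ and then multiplying through by $-1$ (which reverses both inequalities and interchanges the roles of the endpoints $a$ and $b$) yields
$$
f(b)-\frac{n}{2}\Phi\Big(\frac{b-a}{n}\Big)\leq\frac{1}{b-a}\mathcal{S}_{n}^{\frac{3}{8}}(f)\leq f(a)+ \frac{n}{2}\Phi\Big(\frac{b-a}{n}\Big).
$$

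Finally I would combine the two displays. Their left-hand sides show that both $f(a)-\tfrac{n}{2}\Phi\big(\tfrac{b-a}{n}\big)$ and $f(b)-\tfrac{n}{2}\Phi\big(\tfrac{b-a}{n}\big)$ are lower bounds for $\tfrac{1}{b-a}\mathcal{S}_{n}^{\frac{3}{8}}(f)$, whence their maximum, $\max\{f(a),f(b)\}-\tfrac{n}{2}\Phi\big(\tfrac{b-a}{n}\big)$, is again a lower bound; symmetrically, the two right-hand sides force $\min\{f(a),f(b)\}+\tfrac{n}{2}\Phi\big(\tfrac{b-a}{n}\big)$ as an upper bound. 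This is exactly the asserted inequality.

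There is essentially no serious obstacle here, which is why the statement can be recorded without a separate argument; the only point requiring a moment of care is the sign bookkeeping when passing from $-f$ back to $f$, namely ensuring that the endpoints are correctly swapped and that each inequality is reversed under multiplication by $-1$. Everything else is the identical $\max/\min$ combination already exploited in Corollaries~\ref{C1} and \ref{C2}.
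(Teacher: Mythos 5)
Your proposal is correct and is precisely the argument the paper has in mind: the authors omit the proof of Corollary~\ref{C3} because it follows from Theorem~\ref{T3} exactly as Corollary~\ref{C1} follows from Theorem~\ref{T1}, namely by applying the theorem to both $f$ and $-f$ (using linearity of $\mathcal{S}_{n}^{\frac{3}{8}}$) and taking the $\max$/$\min$ of the resulting bounds. Your sign bookkeeping, including the swap of the endpoint roles after multiplying by $-1$, is accurate, so nothing further is needed.
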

In the next section, we discuss  results related to $\Phi$-convex and $\Phi$-affine functions.

\section{On approximately convex functions}\label{sec4}

The main objective of this section is to present a numerical version of Hermite-Hadamard type inequality for $\Phi$-convex functions. Here also, $I$ will stand for non-empty and non-singleton interval $[a,b]$ with $a<b$.
\begin{theorem}\label{T333}
Suppose $f:I\to\R$ is a continuous $\Phi$-convex function. Then for any $n\in\N$, the following inequalities satisfy
\begin{equation}\label{SS2}
f\bigg(\dfrac{a+b}{2}\bigg)-E_n\leq\frac{1}{b-a}\mathcal{T}_{n}(f)\leq \dfrac{f(a)+f(b)}{2}+\dfrac{n^2-1}{6}\Phi\Big(\frac{b-a}{n}\Big),
\end{equation}
\end{theorem}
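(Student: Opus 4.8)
The plan is to reduce the whole statement to a single discrete inequality and then to sum its instances against carefully chosen nonnegative weights, exactly in the spirit of the proofs of Theorems~\ref{T1}--\ref{T3}. I partition $I$ as in \eq{part}, write $x_i=a+\frac{i}{n}(b-a)$ and abbreviate $c:=\Phi\big(\frac{b-a}{n}\big)$. The key observation is that applying the $\Phi$-convexity inequality \eq{2} with $t=\frac12$, $x=x_{i-1}$ and $y=x_{i+1}$, so that $x_i$ is their midpoint and $\frac12|x_{i+1}-x_{i-1}|=\frac{b-a}{n}$, yields for every $i=1,\dots,n-1$ the three-point inequality
\begin{equation}\label{key}
f(x_{i-1})-2f(x_i)+f(x_{i+1})\ge -2c.
\end{equation}
This is the step that keeps the error evaluated only at the single length $\frac{b-a}{n}$: expressing $x_i$ directly as a convex combination of $a$ and $b$ would instead force $\Phi$ to be evaluated at the lengths $\frac{i}{n}(b-a)$, which we must avoid.

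For the right-hand (upper) bound I would first majorize every interior value by the chord plus a quadratic correction,
\begin{equation}\label{maj}
f(x_i)\le\Big(1-\tfrac{i}{n}\Big)f(a)+\tfrac{i}{n}f(b)+i(n-i)\,c,\qquad i=0,1,\dots,n.
\end{equation}
Indeed, the sequence $g_i:=f(x_i)-\big(1-\tfrac{i}{n}\big)f(a)-\tfrac{i}{n}f(b)-i(n-i)c$ vanishes at $i=0$ and $i=n$, and since the second difference of $i\mapsto i(n-i)$ equals $-2$, inequality \eq{key} shows that $(g_i)$ is discretely convex, i.e. $g_{i-1}-2g_i+g_{i+1}\ge 0$; a discretely convex sequence with vanishing endpoints lies below the zero chord, whence $g_i\le 0$, which is \eq{maj}. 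Summing \eq{maj} with the trapezoidal weights and using $\sum_{i=1}^{n-1}i(n-i)=\frac{n(n^2-1)}{6}$, the chord terms reproduce $\frac{f(a)+f(b)}{2}$ while the correction contributes exactly $\frac{n^2-1}{6}c$, which is the upper bound in \eq{SS2}. Equivalently, one may sum \eq{key} directly with the nonnegative multipliers $\frac{i(n-i)}{2n}$.

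For the left-hand (lower) bound I would combine the inequalities \eq{key} with nonnegative multipliers tuned so that the telescoped left-hand side becomes precisely $f\big(\frac{a+b}{2}\big)-\frac{1}{b-a}\mathcal{T}_{n}(f)$. For even $n$, with $m=\frac n2$ so that $\frac{a+b}{2}=x_m$, the multipliers are $\mu_i=\frac{(\min\{i,n-i\})^2}{2n}\ge 0$; summing \eq{key} multiplied by $\mu_i$ over $i=1,\dots,n-1$ gives $f(x_m)-\frac{1}{b-a}\mathcal{T}_{n}(f)$ on the left and $2c\sum_i\mu_i=\frac{n^2+2}{12}c=E_n$ on the right, which is the announced lower bound. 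That the multipliers telescope correctly reduces to checking the elementary identities that the second difference of $(\mu_i)$ equals $\frac1n$ for $i\ne m$ and $\frac1n-1$ at $i=m$, together with $\mu_1=\mu_{n-1}=\frac1{2n}$; this is exactly where the constant $\frac{n^2+2}{12}$ is produced.

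The main obstacle is this lower bound. In contrast to the upper bound, there is no pointwise quadratic minorant of $f(x_i)$, because \eq{key} controls the second differences only from below; hence $f\big(\frac{a+b}{2}\big)$ cannot be dominated in a single convexity step and one must instead go through the averaged quantity $\frac{1}{b-a}\mathcal{T}_{n}(f)$ via the weighted sum above, after verifying that each multiplier is nonnegative. A secondary difficulty is the odd case: then $\frac{a+b}{2}$ is the midpoint of the central subinterval rather than a grid point, so an extra application of \eq{2} is needed to introduce $f\big(\frac{a+b}{2}\big)$, and this brings in the additional length $\frac{b-a}{2n}$; consequently $E_n$ for odd $n$ will also depend on $\Phi\big(\frac{b-a}{2n}\big)$, which accounts for the two forms of the error term.
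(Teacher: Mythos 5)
Your proposal is correct, and it organizes the argument differently from the paper, although both proofs start from the identical key step: your three-point inequality is exactly the paper's system \eq{sys1} rearranged, obtained by putting $t=\frac12$ in \eq{2} on consecutive grid pairs so that $\Phi$ is only ever evaluated at $\frac{b-a}{n}$. The paper proceeds by repeatedly summing nested sub-systems to build the telescoped systems \eq{eq10} and \eq{sys12} and then weighting those by arithmetic progressions, with a parity split even for the upper bound; you instead weight the original second-difference inequalities directly with closed-form quadratic multipliers via summation by parts. I checked your identities: for the upper bound, the chord majorization $f(x_i)\le\big(1-\frac{i}{n}\big)f(a)+\frac{i}{n}f(b)+i(n-i)\Phi\big(\frac{b-a}{n}\big)$ follows from the discrete-convexity/zero-endpoint argument as you say, and with $\sum_{i=1}^{n-1}i(n-i)=\frac{n(n^2-1)}{6}$ it reproduces the constant $\frac{n^2-1}{6}$; this pointwise, parity-free lemma is genuinely more informative than the paper's even/odd computation, since it immediately yields the analogous upper bound for any quadrature rule with nonnegative weights (e.g.\ the Simpson-type rules the paper only mentions at the end). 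For the lower bound with $n$ even, your multipliers $\mu_i=\frac{(\min\{i,n-i\})^2}{2n}$ check out: the second differences are $\frac1n$ away from $i=\frac n2$ and $\frac1n-1$ there, $\mu_1=\mu_{n-1}=\frac1{2n}$, the Abel sum telescopes to $\frac{1}{b-a}\mathcal{T}_{n}(f)-f\big(\frac{a+b}{2}\big)$, and $2\Phi\big(\frac{b-a}{n}\big)\sum_i\mu_i=\frac{n^2+2}{12}\Phi\big(\frac{b-a}{n}\big)=E_n$. You only sketch the odd case, but the very same formula for $\mu_i$ works: the second difference equals $\frac1n-\frac12$ at the two central indices $i=\frac{n\mp1}{2}$, the sum telescopes to $\frac{1}{b-a}\mathcal{T}_{n}(f)-\frac12\big[f\big(x_{\frac{n-1}{2}}\big)+f\big(x_{\frac{n+1}{2}}\big)\big]$ with error coefficient $\frac{n^2-1}{12}$, and one final application of \eq{10} on the central subinterval contributes the extra $\Phi\big(\frac{b-a}{2n}\big)$ — exactly the two-term $E_n$ of \eq{SS2b}, and exactly how the paper itself closes the odd case. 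In net effect your explicit weights coincide with the aggregate weights implicit in the paper's successive summations, so the constants necessarily agree; what your version buys is a uniform closed form, a reusable pointwise majorant, and a single verification scheme in place of four weighted-sum case analyses, at the cost of the summation-by-parts bookkeeping.
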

where
\begin{equation}\label{SS2b}
E_n=
\left\{\begin{array}{ll}
   \frac{n^2+2}{12}\Phi\!\left(\frac{b-a}{n}\right), & \mbox{if $n$ is even,}\\[0.1cm]
   \frac{n^2-1}{12}\Phi\!\left(\frac{b-a}{n}\right)+\Phi\!\left(\frac{b-a}{2n}\right), &
   \mbox{if $n$ is odd.}
       \end{array}\right.
\end{equation}
\begin{proof}
Let $x<y$.
By substituting $t=\frac{1}{2}$  in \eq{2}, we arrive at the following
\begin{equation}\label{10}
2f\bigg(\dfrac{x+y}{2}\bigg)\leq f(x)+f(y)+2\Phi\bigg(\dfrac{y-x}{2}\bigg).
\end{equation}
We partition the interval $I$ as defined in \eq{part}.
Now instead of $x$ and $y$; substituting the pairs $(x_{i-1},x_{i+1})$ for all $i=1,\ldots,n-1$ in \eq{10}, we get the following system of $n-1$ inequalities
\begin{equation}\label{sys1}\begin{aligned}
2f(x_1)=2f\bigg(\dfrac{x_0+x_2}{2}\bigg)&\leq f(x_0)+f(x_2)+2\Phi\bigg(\dfrac{b-a}{n}\bigg)\\
&\vdots\\
2f(x_{n-1})=2f\bigg(\dfrac{x_{n-2}+x_n}{2}\bigg)&\leq f(x_{n-2})+f(x_n)+2\Phi\bigg(\dfrac{b-a}{n}\bigg)
\end{aligned}\end{equation}
To establish \eq{SS2}, we consider two cases.  At first we assume that $n$ is odd. Now summing up the all $n-1$ inequalities of \eq{sys1}, we obtain the following
$$
f(x_1)+f(x_{n-1})\leq f(x_0)+f(x_n)+2(n-1)\Phi\bigg(\dfrac{b-a}{n}\bigg).
$$
Now excluding the very first and last inequalities from \eq{sys1} and summing up all the remaining $n-3$ inequalities, we arrive at
$$
f(x_{2})+f(x_{n-2})\leq f(x_1)+f(x_{n-1})+2(n-3)\Phi\bigg(\dfrac{b-a}{n}\bigg).
$$
Continuing the same way, we finally have the system of $\dfrac{n-1}{2}$ inequalities as follows
\begin{equation}\label{eq10}\begin{aligned}
f\big(x_{\frac{n-1}{2}}\big)+f\big(x_{\frac{n+1}{2}}\big)&\leq f\big(x_{\frac{n-3}{2}}\big)+f\big(x_{\frac{n+3}{2}}\big)+4\Phi\bigg(\dfrac{b-a}{n}\bigg)\\
f\big(x_{\frac{n-3}{2}}\big)+f\big(x_{\frac{n+3}{2}}\big)&\leq f\big(x_{\frac{n-5}{2}}\big)+f\big(x_{\frac{n+5}{2}}\big)+8\Phi\bigg(\dfrac{b-a}{n}\bigg)\\
&\vdots\\
f(x_2)+f(x_{n-2})&\leq f(x_1)+f(x_{n-1})+2(n-3)\Phi\bigg(\dfrac{b-a}{n}\bigg)\\
f(x_1)+f(x_{n-1})&\leq f(x_0)+f(x_n)+2(n-1)\Phi\bigg(\dfrac{b-a}{n}\bigg).
\end{aligned}\end{equation}
Now multiplying each of the inequalities of the above system by $2,4,\ldots,n-3,n-1$ respectively and summing those up side by side we have
$$
2[f(x_1)+\ldots +f(x_{n-1})]\leq (n-1)(f(x_0)+f(x_n))+2\bigg(\sum_{i=1}^{(n-1)/{2}}(2i)^2\bigg)\Phi\bigg(\dfrac{b-a}{n}\bigg).
$$
Adding $f(x_0)+f(x_n)$ to both sides of the above inequality and multiplying the resulting inequality by $\dfrac{b-a}{2n}$, we obtain
\begin{equation}\label{22222}
\mathcal{T}_{n}(f)\leq (b-a)\bigg[\dfrac{f(a)+f(b)}{2}+\dfrac{n^2-1}{6}\Phi\Big(\frac{b-a}{n}\Big)\bigg].
\end{equation}
On the other hand, if $n$ is even; we formulate the following system of $n-1$ inequalities by
applying the same method as described above
\begin{equation}\label{sys12}\begin{aligned}
2f\big(x_{\frac{n}{2}}\big)&\leq f\big(x_{\frac{n}{2}-1}\big)+f\big(x_{\frac{n}{2}+1}\big)+2\Phi\bigg(\dfrac{b-a}{n}\bigg)\\
f\big(x_{\frac{n}{2}-1}\big)+f\big(x_{\frac{n}{2}+1}\big)&\leq f\big(x_{\frac{n}{2}-2}\big)+f\big(x_{\frac{n}{2}+2}\big)+6\Phi\bigg(\dfrac{b-a}{n}\bigg)\\
&\vdots\\
f(x_2)+f(x_{n-2}) &\leq f(x_1)+f(x_{n-1})+2(n-3)\Phi\bigg(\dfrac{b-a}{n}\bigg)\\
f(x_1)+f(x_{n-1})&\leq f(x_0)+f(x_n)+2(n-1)\Phi\bigg(\dfrac{b-a}{n}\bigg).
\end{aligned}\end{equation}
We multiply each inequalities of the above system respectively by $1,3,\ldots,n-3, n-1$ and then adding the resultant inequalities side by side, we arrive the following
$$
2[f(x_1)+\cdots +f(x_{n-1})]\leq (n-1)(f(x_0)+f(x_n))+2\bigg(\sum_{i=1}^{n/{2}}(2i-1)^2\bigg)
\Phi\bigg(\dfrac{b-a}{n}\bigg).
$$
By adding $f(x_0)+f(x_n)$, and then  multiplying both sides of the  resulting  inequality by $\dfrac{b-a}{2n}$ and utilizing the identity $2\displaystyle\sum_{i=1}^{n/{2}}(2i-1)^2=\frac{n^3-n}{3}$  we again obtain \eq{22222}.
This yields  the second inequality of \eq{SS2} for any $n\in\N$ and establishes the first assertion.

To show the the first inequality of \eq{SS2}, first we assume $n$ is even, and we construct the $n-1$ system of inequalities as described in \eq{sys12}. Now first multiplying the inequalities of the system by $n-1,n-3,\ldots,1$ respectively and then summing the resultant inequalities side by side, and using the identity
$\displaystyle2\sum_{i=1}^{n/2}(2i-1)(n-(2i-1))=\frac{n(n^2+2)}{6}$,
we obtain
$$
2nf(x_{\frac{n}{2}})\leq f(x_0)+2\sum_{i=1}^{n-1}f(x_i)+f(x_n)
+\dfrac{n(n^2+2)}{6}\Phi\bigg(\dfrac{b-a}{n}\bigg).
$$
Now we again multiply both sides of the above inequality by $\dfrac{b-a}{n}$. Applying the trapezoidal rule to the resultant, we get
$$
(b-a)\bigg[f\bigg(\frac{a+b}{2}\bigg)-\dfrac{n^2+2}{12}\Phi\bigg(\dfrac{b-a}{n}\bigg)\bigg]\leq \mathcal{T}_{n}(f).
$$

Next we suppose $n$ is odd.  We multiply the inequalities in \eq{eq10} by $n-2,n-4,\ldots,3,1$, respectively,
and add them up, then we get
\begin{align*}
 (n-2)\left[f\big(x_{\frac{n-1}{2}}\big)+f\big(x_{\frac{n+1}{2}}\big)\right]&\leq
f(x_0)+2\biggl(f(x_1)+\cdots+f\big(x_{\frac{n-3}{2}}\big)+f\big(x_{\frac{n+3}{2}}\big)+\cdots\\
&\quad+f(x_{n-1})\biggr)+f(x_n)+2 \sum_{i=1}^{(n-1)/2} 2i(n-2i)
\Phi\bigg(\dfrac{b-a}{n}\bigg).
\end{align*}
This yields
$$
n\left[f\big(x_{\frac{n-1}{2}}\big)+f\big(x_{\frac{n+1}{2}}\big)\right]\leq
f(x_0)+2\sum_{i=1}^{n-1}f(x_i)+f(x_n)+\dfrac{n^3-n}{6}\Phi\bigg(\dfrac{b-a}{n}\bigg),
$$
hence by multiplication with $\dfrac{1}{2n}$ we obtain
$$
\dfrac{1}{2}\left[f\big(x_{\frac{n-1}{2}}\big)+f\big(x_{\frac{n+1}{2}}\big)\right]\leq
\dfrac1{b-a}\mathcal{T}_{n}(f)+\dfrac{n^2-1}{12}\Phi\bigg(\dfrac{b-a}{n}\bigg).
$$
Then \eq{10} implies
\begin{align*}
 f\left(\dfrac{a+b}2 \right)-\Phi\bigg(\dfrac{b-a}{2n}\bigg)&\leq \dfrac{1}{2}\left[f\big(x_{\frac{n-1}{2}}\big)+f\big(x_{\frac{n+1}{2}}\big)\right]\\
&\leq
\dfrac1{b-a}\mathcal{T}_{n}(f)+\dfrac{n^2-1}{12}\Phi\bigg(\dfrac{b-a}{n}\bigg).
\end{align*}
This together with \eq{SS2b} establishes the first inequality of \eq{SS2} and completes the proof.
\end{proof}
For the case $\Phi\equiv 0$; the definition of $\Phi$-convex function (in equation \eq{2}), turns to be  the usual convexity. From the above theorem; it is  evident that if $f:I\to\R$ is convex, then for any  $n\in\N$, the function $f$ satisfies the following inequality
$$
f\bigg(\frac{a+b}{2}\bigg)\leq\dfrac{1}{b-a} \mathcal{T}_{n}(f)\leq\dfrac{f(a)+f(b)}{2}.
$$
Next, we are going to study Hermite-Hadamard inequality for $\Phi$-affine functions.
\begin{corollary}\label{T3333}
Suppose $f:I\to\R$ is a continuous $\Phi$-affine function. Then for any $n\in\N$, the following
inequalities are satisfied
\begin{align}
\max\bigg\{f\bigg(\dfrac{a+b}{2}\bigg)&-E_n,\,\,\dfrac{f(a)+f(b)}{2}-\dfrac{n^2-1}{6}\Phi\Big(\frac{b-a}{n}\Big)\bigg\}\nonumber\\
&\leq\frac{1}{b-a}\mathcal{T}_{n}(f)\label{SS9}\\
&\leq\min\bigg\{f\bigg(\dfrac{a+b}{2}\bigg)+E_n,\,\,\dfrac{f(a)+f(b)}{2}+\dfrac{n^2-1}{6}\Phi\Big(\frac{b-a}{n}\Big)\bigg\},\quad\nonumber
\end{align}
where $E_n$ is defined by \eq{SS2b}.
\end{corollary}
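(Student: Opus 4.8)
The plan is to imitate the passage from Theorem~\ref{T1} to Corollary~\ref{C1}. Since $f$ is $\Phi$-affine, both $f$ and $-f$ are $\Phi$-convex, so Theorem~\ref{T333} is applicable to each of them. Applying \eq{SS2} directly to $f$ reproduces the pair of bounds
$$
f\Big(\frac{a+b}2\Big)-E_n\leq \frac1{b-a}\mathcal{T}_n(f)\leq \frac{f(a)+f(b)}2+\frac{n^2-1}6\Phi\Big(\frac{b-a}n\Big),
$$
which already supplies one of the two candidates on each side of \eq{SS9}.

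Next I would apply Theorem~\ref{T333} to $-f$. Using the linearity of the Trapezoidal rule, $\mathcal{T}_n(-f)=-\mathcal{T}_n(f)$, together with $(-f)\big(\tfrac{a+b}2\big)=-f\big(\tfrac{a+b}2\big)$ and $\tfrac{(-f)(a)+(-f)(b)}2=-\tfrac{f(a)+f(b)}2$, inequality \eq{SS2} written for $-f$ becomes
$$
-f\Big(\frac{a+b}2\Big)-E_n\leq -\frac1{b-a}\mathcal{T}_n(f)\leq -\frac{f(a)+f(b)}2+\frac{n^2-1}6\Phi\Big(\frac{b-a}n\Big).
$$
Multiplying through by $-1$ and reversing the inequalities then gives
$$
\frac{f(a)+f(b)}2-\frac{n^2-1}6\Phi\Big(\frac{b-a}n\Big)\leq \frac1{b-a}\mathcal{T}_n(f)\leq f\Big(\frac{a+b}2\Big)+E_n,
$$
which furnishes the remaining candidate on each side of \eq{SS9}.

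Finally I would combine the two displayed pairs of estimates. The quantities $f\big(\tfrac{a+b}2\big)-E_n$ and $\tfrac{f(a)+f(b)}2-\tfrac{n^2-1}6\Phi\big(\tfrac{b-a}n\big)$ both bound $\tfrac1{b-a}\mathcal{T}_n(f)$ from below, hence so does their maximum, yielding the left-hand side of \eq{SS9}; symmetrically, $\tfrac{f(a)+f(b)}2+\tfrac{n^2-1}6\Phi\big(\tfrac{b-a}n\big)$ and $f\big(\tfrac{a+b}2\big)+E_n$ both bound it from above, so their minimum yields the right-hand side. I do not anticipate any genuine obstacle here: the statement is a direct consequence of Theorem~\ref{T333} applied to $f$ and $-f$, and the only point demanding care is the bookkeeping of signs when transferring the estimate for $-f$ back to $f$ and the correct pairing of the four resulting bounds into the $\max$ and $\min$ appearing in \eq{SS9}.
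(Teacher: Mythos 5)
Your proposal is correct and is essentially the paper's own argument: the authors likewise apply \eq{SS2} to $-f$ (valid since $\Phi$-affinity makes $-f$ $\Phi$-convex), obtaining $\tfrac{f(a)+f(b)}{2}-\tfrac{n^2-1}{6}\Phi\big(\tfrac{b-a}{n}\big)\leq \tfrac{1}{b-a}\mathcal{T}_{n}(f)\leq f\big(\tfrac{a+b}{2}\big)+E_n$, and combine this with \eq{SS2} for $f$ to get the $\max$ and $\min$ bounds. Your sign bookkeeping via $\mathcal{T}_n(-f)=-\mathcal{T}_n(f)$ is exactly the step the paper leaves implicit, so nothing is missing.
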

\begin{proof}
Since $f$ is $\Phi$-affine, together with $f$, the function $-f$  also satisfies $\Phi$-convexity. Thus instead of $f$, we can replace $-f$ in \eq{SS2} to obtain
$$
		\dfrac{f(a)+f(b)}{2}-\dfrac{n^2-1}{6}\Phi\Big(\frac{b-a}{n}\Big)\leq \frac{1}{b-a}\mathcal{T}_{n}(f)
		\leq f\bigg(\dfrac{a+b}{2}\bigg)+E_n.
$$
This inequality together with \eq{SS2} establishes the the result.
\end{proof}

We note that by following the method of Theorem~\ref{T333}
 we can obtain similar estimates  for Simpson's and Simpson's $3/8$ rules.
Clearly, the proofs of this paper can be easily extended to estimates of the numerical integral mean
associated to other Newton-Cotes quadrature formulas.

We close this paper with the following observation. If $f\in C^2[a,b]$, then it is known (see, e.g., \cite{BF}) that
$\mathcal{T}_{n}(f)\to \int_a^b f(x)\,ds$ as $n\to\infty$. Therefore, if
the error function satisfies
$$
\lim_{n\to\infty} n^2\Phi\Big(\frac{b-a}{n}\Big)=0,
$$
then  estimates \eq{SS2} imply the classical Hermite-Hadamard inequality \eq{eHH} for $\Phi$-convex functions too.

\backmatter

\section*{Declarations}

\bmhead{Funding}
FH thanks the support  of the Hungarian National Research, Development and Innovation Office grant no. K139346.

\bmhead{Competing interests}
The authors declare no competing interests.


\end{document}